\title{A diagrammatic approach to symmetric lenses}
\author{Bryce Clarke\thanks{The author is supported by the 
Australian Government Research Training Program Scholarship.}
\institute{Centre of Australian Category Theory\\
Macquarie University, Australia}
\email{bryce.clarke1@hdr.mq.edu.au}
}
\newtheorem{theorem}{Theorem}[section]
\newtheorem{lemma}[theorem]{Lemma}
\newtheorem{proposition}[theorem]{Proposition}
\newtheorem{corollary}[theorem]{Corollary}
\theoremstyle{definition}
\newtheorem{defn}[theorem]{Definition}
\newtheorem{example}[theorem]{Example}
\theoremstyle{remark}
\newtheorem{notation}[theorem]{Notation}
\numberwithin{equation}{section}
\newcommand{\Cat}{\mathrm{\mathcal{C}at}}
\newcommand{\Cof}{\mathrm{\mathcal{C}of}}
\newcommand{\Meal}{\mathrm{\mathcal{M}eal}}
\newcommand{\Span}{\mathrm{\mathcal{S}pan}}
\newcommand{\SpnLens}{\mathrm{\mathcal{S}pn\mathcal{L}ens}}
\newcommand{\SymLens}{\mathrm{\mathcal{S}ym\mathcal{L}ens}}
\newcommand{\Lens}{\mathrm{\mathcal{L}ens}}
\newcommand{\SOpf}{\mathrm{\mathcal{SO}pf}}
\newcommand{\Mnd}{\mathsf{Mnd}}
\newcommand{\D}{\mathcal{D}}
\newcommand{\M}{\mathcal{M}}
\newcommand{\E}{\mathcal{E}}
\newcommand{\op}{\mathrm{op}}
\newcommand{\phibar}{\overline{\varphi}}
\newcommand{\br}{\rightleftharpoons}
\DeclareMathOperator{\ob}{ob}
\DeclareMathOperator{\cod}{cod}
\begin{document}
\maketitle

\begin{abstract}
Lenses are a mathematical structure for maintaining consistency between
a pair of systems.
In their ongoing research program, Johnson and Rosebrugh have sought to
unify the treatment of symmetric lenses with spans of asymmetric lenses.
This paper presents a diagrammatic approach to symmetric lenses between 
categories, through representing the propagation operations with Mealy 
morphisms. 
The central result of this paper is to demonstrate that the bicategory 
of symmetric lenses is locally adjoint to the bicategory of spans of 
asymmetric lenses, through constructing an explicit adjoint triple 
between the hom-categories. 
\end{abstract}

\section{Introduction}
\label{sec:introduction}

Lenses are a mathematical structure which model sychronisation between
a pair of systems. 
Lenses have been actively studied in both computer science and 
category theory since the seminal paper \cite{FGMPS07},
and now play an important role in a diverse range of applications 
including bidirectional transformations, model-driven engineering, 
database view-updating, systems interoperations, data sharing, and 
functional programming. 

While typically lenses are used to describe \emph{asymmetric}
relationships between systems, many of these examples are better 
understood as special cases of \emph{symmetric lenses}. 
Since the introduction of symmetric lenses in the paper \cite{HPW11},
there has been a significant research program lead by Johnson and 
Rosebrugh (see \cite{JR14, JR15, JR16, JR17art, JR17pro}) to unify their 
treatment with spans of asymmetric lenses. 
However, despite this research revealing numerous important aspects of 
symmetric lenses, many constructions appear ad~hoc by relying upon 
justification from applications, and remain without a robust 
category-theoretic foundation.

This paper develops a diagrammatic approach to symmetric lenses in 
category theory, which clarifies and generalises the previous results by
Johnson and Rosebrugh.
Symmetric lenses are characterised as a pair of \emph{Mealy morphisms}, 
and may be represented as certain diagrams in $\Cat$. 
The main result demonstrates, for a pair of 
systems $A$ and $B$, an adjoint triple between the category of symmetric
lenses and the category of spans of asymmetric lenses.
\begin{equation}
\label{eqn:main}
\begin{tikzcd}[column sep = large]
\SymLens(A, B)
\arrow[r, shift right = 5, ""{name = D}, hook]
\arrow[from = r, ""{name = M}]
\arrow[r, shift left = 5, ""{name = U}, hook]
& \SpnLens(A, B)
\arrow[phantom, from = M, to = D, "\bot"]
\arrow[phantom, from = M, to = U, "\bot"]
\end{tikzcd}
\end{equation}
Furthermore, these adjunctions characterise $\SymLens(A, B)$ as both 
a reflective and coreflective subcategory of $\SpnLens(A, B)$, 
and underlie local adjunctions between the corresponding bicategories 
$\SymLens$ and 
$\SpnLens$. 

This paper treats a system as a category, whose objects are the 
\emph{states} of the system, and whose morphisms are the \emph{updates}
(or transitions) between states of the system. 
In the paper \cite{DXC11}, \emph{asymmetric delta lenses} were introduced
as the maps between systems, which propagate updates in one system to
updates in another. 
A close category-theoretic study of delta lenses appeared in 
\cite{JR13}, and in \cite{AU17} it was discovered that they may be 
understood in terms of functors and cofunctors. 
In the ACT2019 paper \cite{Cla19}, delta lenses were generalised to 
internal category theory, and more importantly, it was shown that an 
asymmetric delta lens may be represented as a certain commutative
diagram in $\Cat$. 

The focus of this paper is \emph{symmetric delta lenses}, introduced in 
\cite{DXCEHO11}, and their relationship with spans of asymmetric delta
lenses.
While the key results are concentrated on the theoretical foundation of 
these structures, this work also contains important benefits towards
applications, from simplifying the use, and further study, of lenses.  

\subsection*{Overview of the paper}

Section~\ref{sec:background} reviews the different kinds of morphisms 
between categories which are later used to define lenses. 
The definitions of discrete opfibration, bijective-on-objects functor,
and fully faithful functor are recalled, as well as the less familiar
definitions of cofunctor (see \cite{HM93, Agu97}) and Mealy morphism
(see \cite{Par12}; also known as a \emph{two-dimensional partial map}
in \cite{LS02, Str20}). 
In Lemma~\ref{lem:cofunrep} and Lemma~\ref{lem:Mealy}, cofunctors and 
Mealy morphisms are faithfully represented as spans in $\Cat$.
Note that Mealy morphisms in this paper are slightly different from
\cite[Example~3]{Par12}, as there is no requirement for the functor 
component to be ``objectwise constant on the fibres''. 
The bicategory $\Meal$ of small categories and Mealy morphisms is 
equivalent to the bicategory $\Mnd(\Span)$ of monads and lax monad 
morphisms in the bicategory of spans. 

In Section~\ref{sec:asymmetriclenses}, the definition of an asymmetric 
lens is recalled from \cite{DXC11, JR13}, and their characterisation 
from \cite{Cla19} as diagrams in $\Cat$ is stated in 
Lemma~\ref{lem:lensrep}.
While the category $\Lens$ of small categories and lenses does not admit 
all pullbacks, it is proved in Proposition~\ref{prop:lensprod} that the 
category $\Lens(B)$, of lenses over a base category $B$, has products. 
Using this proposition, the bicategory $\SpnLens$ of small categories 
and spans of asymmetric lenses is constructed. 
From the perspective of applications, the bicategory $\SpnLens$ allows
the modelling of updates between systems which cannot synchronise 
directly, but instead depend on some intermediary system. 

Section~\ref{sec:symmetriclenses} presents a concise construction of 
the bicategory $\SymLens$ of small categories and symmetric lenses,
using the bicategory $\Meal$. 
A symmetric lens between a pair of systems may be understood as a set
of \emph{correspondences} between the states of the systems, together
with a pair of Mealy morphisms which propagate the system 
updates in each direction. 
Informally, the ``symmetric'' aspect of symmetric lenses may be 
understood in the context of dagger categories, through a canonical
family of functors 
$\dagger \colon \SymLens(A, B) \rightarrow \SymLens(B, A)$ which 
take the opposite of a symmetric lens. 

In Section~\ref{sec:adjointtriple}, the precise categorical relationship
between $\SpnLens$ and $\SymLens$ is presented by the adjoint 
triple in Theorem~\ref{thm:main}. 
The proof relies on the diagrammatic approach to symmetric lenses in 
an essential way, and reveals several aspects of \cite[Theorem~40]
{JR17art} which were hidden by an unnecessary equivalence relation.   

\section{Background}
\label{sec:background}

Let $\Cat$ denote the category of small categories and functors. 
There are three classes of functors that will be of particular 
interest in this paper. 

\begin{defn}
A functor $f \colon A \rightarrow B$ is a \emph{discrete opfibration}
if for all objects $a \in A$ and morphisms 
$u \colon fa \rightarrow b \in B$, there exists a unique morphism 
$\varphi(a, u) \colon a \rightarrow p(a, u)$ in $A$ such that 
$f \varphi(a, u) = u$. 
The notation $p(a,u)$ is used to denote the object 
$\cod(\varphi(a, u))$. 
Let $\D$ denote the class of discrete opfibrations. 
\end{defn}

\begin{defn}
A functor is \emph{bijective-on-objects} if its object assignment is a 
bijection. Let $\E$ denote the class of bijective-on-objects functors.
\end{defn}

\begin{defn}
A functor $f \colon A \rightarrow B$ is \emph{fully faithful} 
if for all objects $a, a' \in A$ and morphisms 
$u \colon fa \rightarrow fa' \in B$, 
there exists a unique morphism $w \colon a \rightarrow a'$ in $A$ such 
that $fw = u$. 
Let $\M$ denote the class of fully faithful functors. 
\end{defn}

There is a well-known orthogonal factorisation system $(\E, \M)$ on $\Cat$, 
called the \emph{bo-ff factorisation system}, in which every functor 
factorises into a bijective-on-objects functor followed by a fully 
faithful functor. 
The \emph{image} of a functor $f \colon A \rightarrow B$ is
a category $I_{f}$ whose objects are those of $A$, and whose morphisms
are triples $(a, u , a') \colon a \rightarrow a'$ where 
$a, a' \in A$ and $u \colon fa \rightarrow fa' \in B$. 
The functor $f \colon A \rightarrow B$ factorises through the image
as follows: 
\begin{equation}
\label{eqn:boff}
\begin{tikzcd}[column sep = large]
A
\arrow[r]
& I_{f}
\arrow[r]
& B \\[-1.5em]
a
\arrow[d, "w"']
& a 
\arrow[l, phantom, "\cdots"]
\arrow[r, phantom, "\cdots"]
\arrow[d, "{(a, fw, a')}"]
& fa 
\arrow[d, "fw"] \\
a' & a' 
\arrow[l, phantom, "\cdots"]
\arrow[r, phantom, "\cdots"]
& fa'
\end{tikzcd}
\end{equation}

The universal property of the bo-ff factorisation system may be stated
as follows. 
Given a commutative square of functors, 
\begin{equation}
\label{eqn:boffuniprop}
\begin{tikzcd}
A 
\arrow[d, "e"']
\arrow[r, "f"]
& B 
\arrow[d, "m"]
\\
C 
\arrow[r, "g"']
\arrow[ru, dashed, "h"]
& D
\end{tikzcd}
\end{equation}
where $e$ is bijective-on-objects and $m$ is fully faithful, there 
exists a unique functor $h \colon C \rightarrow B$ such that 
$h \circ e = f$ and $m \circ h = g$. 
In particular, note that this universal property defines the 
image $I_{f}$ uniquely up to isomorphism. 

\begin{defn}[See \cite{Agu97}]
\label{defn:cofunctor}
Let $A$ and $B$ be categories. 
A \emph{cofunctor} $\varphi \colon B \nrightarrow A$ consists of an 
assignment on objects $\varphi \colon \ob(A) \rightarrow \ob(B)$ together 
with an operation assigning each pair $(a, u)$, where $a \in A$ and 
$u \colon \varphi a \rightarrow b \in B$, to a morphism 
$\varphi(a, u) \colon a \rightarrow p(a, u)$ in $A$, satisfying the 
axioms: 
\begin{enumerate}[(1), itemsep = +1ex]
\item $\varphi p(a, u) = b$ 
\item $\varphi(a, 1_{\varphi a}) = 1_{a}$ 
\item $\varphi(a, v \circ u) = \varphi(p(a, u), v) \circ \varphi(a, u)$ 
\end{enumerate}
The notation $p(a,u)$ is used to denote the object 
$\cod(\varphi(a, u))$. 
\end{defn}

\begin{example}
Every discrete opfibration $A \rightarrow B$ yields a cofunctor 
$B \nrightarrow A$, and every bijective-on-objects functor 
$A \rightarrow B$ yields a cofunctor $A \nrightarrow B$. 
\end{example}

Let $\Cof$ denote the category of small categories and cofunctors. 
Given cofunctors $\gamma \colon C \nrightarrow B$ and 
$\varphi \colon B \nrightarrow A$, their composite 
$\varphi \circ \gamma \colon C \nrightarrow A$ may be understood from 
the following diagram: 
\begin{equation}
\label{eqn:cofuncomp}
\begin{tikzcd}[column sep = large]
C
\arrow[r, "/"{anchor=center,sloped}]
& B
\arrow[r, "/"{anchor=center,sloped}]
& A \\[-1.5em]
\gamma \varphi a
\arrow[d, "u"']
& \varphi a 
\arrow[l, phantom, "\cdots"]
\arrow[r, phantom, "\cdots"]
\arrow[d, "{\gamma(\varphi a, u)}"]
& a
\arrow[d, "{\varphi(a, \gamma(\varphi a,u))}"] \\
c 
& q(\varphi a, u) 
\arrow[l, phantom, "\cdots"]
\arrow[r, phantom, "\cdots"]
& p(a, \gamma(\varphi a,u))
\end{tikzcd}
\end{equation}

There is an orthogonal factorisation system $(\D^{\op}, \E)$ on $\Cof$, 
in which every cofunctor factorises into a discrete opfibration 
(taken in the opposite direction) followed 
by a bijective-on-objects functor.
The \emph{image} of a cofunctor $\varphi \colon B \nrightarrow A$ 
is a category $\Lambda$ whose objects are those of $A$, 
and whose morphisms are pairs $(a, u) \colon a \rightarrow p(a, u)$,
where $a \in A$ and $u \colon \varphi a \rightarrow b \in B$. 
The cofunctor $\varphi \colon B \nrightarrow A$ factorises through 
the image as follows: 
\begin{equation}
\label{eqn:cofunfac}
\begin{tikzcd}[column sep = large]
B
\arrow[r, "/"{anchor=center,sloped}]
& \Lambda
\arrow[r, "/"{anchor=center,sloped}]
& A \\[-1.5em]
\varphi a
\arrow[d, "u"']
& a 
\arrow[l, phantom, "\cdots"]
\arrow[r, phantom, "\cdots"]
\arrow[d, "{(a, u)}"]
& a
\arrow[d, "{\varphi(a, u)}"] \\
b
& p(a, u) 
\arrow[l, phantom, "\cdots"]
\arrow[r, phantom, "\cdots"]
& p(a, u)
\end{tikzcd}
\end{equation}
Notice that the cofunctor $B \nrightarrow \Lambda$ describes a discrete
opfibration $\Lambda \rightarrow B$, and the cofunctor 
$\Lambda \nrightarrow A$ describes an identity-on-objects functor 
$\Lambda \rightarrow A$. 

\begin{lemma}
\label{lem:cofunrep}
Given a cofunctor $\varphi \colon B \nrightarrow A$ there is a span of 
functors, 
\begin{equation}
\label{eqn:cofunrep}
\begin{tikzcd}[row sep = small, column sep = small]
& \Lambda 
\arrow[ld, "\phibar"']
\arrow[rd, "\varphi"]
& \\
B & & A
\end{tikzcd}
\end{equation}
where $\phibar$ is a discrete opfibration and $\varphi$ is 
identity-on-objects.
\end{lemma}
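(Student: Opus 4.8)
The plan is to take $\Lambda$ to be exactly the image category appearing in the factorisation \eqref{eqn:cofunfac} and to read off the two legs of the span from that diagram, so that the lemma becomes a direct repackaging of the $(\D^{\op}, \E)$ factorisation on $\Cof$. Concretely, I would define $\phibar \colon \Lambda \rightarrow B$ to act as the object assignment of the cofunctor, sending $a \mapsto \varphi a$, and to send each morphism $(a, u) \colon a \rightarrow p(a, u)$ to the underlying morphism $u \colon \varphi a \rightarrow b$ in $B$. Dually I would define the functor also written $\varphi \colon \Lambda \rightarrow A$ to be the identity on objects and to send $(a, u)$ to the lifted morphism $\varphi(a, u) \colon a \rightarrow p(a, u)$.

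Before verifying functoriality I would confirm that $\Lambda$ is a well-defined category. The identity on an object $a$ is the pair $(a, 1_{\varphi a})$, which by axiom~(2) has codomain $p(a, 1_{\varphi a}) = \cod \varphi(a, 1_{\varphi a}) = \cod 1_{a} = a$. Given composable morphisms $(a, u) \colon a \rightarrow p(a,u)$ and $(p(a,u), v) \colon p(a,u) \rightarrow p(p(a,u), v)$, axiom~(1) ensures that $\varphi p(a,u) = b = \cod u$, so the composite $v \circ u$ is defined in $B$, and I would set the composite in $\Lambda$ to be $(a, v \circ u)$. The equation $p(p(a,u), v) = p(a, v \circ u)$ that this requires in order to typecheck is obtained by taking codomains on both sides of axiom~(3).

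With $\Lambda$ in hand, functoriality is a short check against the axioms. Both legs preserve identities: $\phibar(a, 1_{\varphi a}) = 1_{\varphi a}$ on the nose, while $\varphi(a, 1_{\varphi a}) = 1_{a}$ by axiom~(2). For composition, $\phibar$ preserves it by construction, since $\phibar(a, v \circ u) = v \circ u$, and $\varphi$ preserves it by exactly axiom~(3), namely $\varphi(a, v \circ u) = \varphi(p(a,u), v) \circ \varphi(a, u)$.

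Finally I would establish the two claimed properties of the legs. That $\varphi \colon \Lambda \rightarrow A$ is identity-on-objects is immediate from its definition. To see that $\phibar$ is a discrete opfibration, I would fix an object $a$ of $\Lambda$ and a morphism $u \colon \phibar a = \varphi a \rightarrow b$ in $B$; the pair $(a, u)$ is a morphism out of $a$ in $\Lambda$ satisfying $\phibar(a, u) = u$, and since every morphism of $\Lambda$ with source $a$ has the form $(a, u')$ with $\phibar(a, u') = u'$, this lift is the unique one over $u$. I do not anticipate a serious obstacle: the whole argument is a bookkeeping translation of the three cofunctor axioms into the span, the only mildly delicate point being the codomain-matching identity $p(p(a,u), v) = p(a, v \circ u)$ which makes composition in $\Lambda$ well-defined.
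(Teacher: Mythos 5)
Your construction is exactly the one the paper uses: $\Lambda$ is the image category from the $(\D^{\op},\E)$ factorisation \eqref{eqn:cofunfac}, with the two legs read off from that diagram, and your verification of the category axioms, functoriality, and the discrete opfibration property is the routine check the paper leaves implicit. The proposal is correct and follows essentially the same route as the paper.
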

If $\Cof$ is understood as a locally-discrete $2$-category, 
Lemma~\ref{lem:cofunrep} provides a way of constructing a locally 
fully faithful, identity-on-objects pseudofunctor 
$\Cof \rightarrow \Span(\Cat)$. 
From now on cofunctors will always be given by their span representation
\eqref{eqn:cofunrep}. 

\begin{defn}[See \cite{Par12}]
\label{defn:Mealymorphism}
Let $A$ and $B$ be categories. 
A \emph{Mealy morphism} $A \nrightarrow B$ consists of a discrete 
category $X_{0}$ together with a span of functors
$(g_{0}, X_{0}, f_{0}) \colon A \nrightarrow B$ 
and operations assigning each pair $(x, u)$, where $x \in X_{0}$ and 
$u \colon g_{0}x \rightarrow a \in A$, to an object $q(x, u)$ in $X_{0}$
and a morphism $f(x, u) \colon f_{0}x \rightarrow f_{0}q(x, u)$ in $B$,
satisfying the axioms: 
\begin{enumerate}[(1), itemsep = +1ex]
\item $g_{0}q(x, u) = a$ 
\item $q(x, 1_{gx}) = x$ and $f(x, 1_{g_{0}x}) = 1_{f_{0}x}$
\item $q(x, v \circ u) = q(q(x, u), v)$ 
		and $f(x, v \circ u) = f(q(x, u), v) \circ f(x, u)$
\end{enumerate}
\end{defn}

\begin{example}
Every functor $A \rightarrow B$ yields a Mealy morphism 
$A \nrightarrow B$, and every cofunctor $B \rightarrow A$ yields a 
Mealy morphism $B \nrightarrow A$.  
\end{example}

\begin{example}[Example~4 in \cite{Par12}]
Given a pair of sets $A$ and $B$, a Mealy morphism between free monoids 
$A^{\ast}$ and $B^{\ast}$ is exactly a \emph{Mealy machine} with input 
alphabet $A$ and output alphabet $B$.
\end{example}

Let $\Meal$ denote the \emph{bicategory} of small categories and Mealy 
morphisms. Unlike the special cases functors and cofunctors, composition
of Mealy morphisms is not strictly associative, since the structure 
involves a span of functions. 
There are two possible notions of $2$-cell between Mealy morphisms; 
this paper uses the stricter notion as given below. 

\begin{defn}
\label{defn:mealy}
Let $(X_{0}, g_{0}, f_{0}, q, f)$ and $(Y_{0}, k_{0}, h_{0}, p, h)$ be 
Mealy morphisms $A \nrightarrow B$. 
A \emph{map of Mealy morphisms} consists of a morphism of spans,
\begin{equation}
\begin{tikzcd}[row sep = small, column sep = small]
& X_{0}
\arrow[ld, "g_{0}"']
\arrow[rd, "f_{0}"]
\arrow[dd, "m"]
& \\
A
& & B
\\
& Y_{0}
\arrow[lu, "k_{0}"]
\arrow[ru, "h_{0}"']
& 
\end{tikzcd}
\end{equation}
such that $mq(x, u) = p(mx, u)$ and $f(x, u) = h(mx, u)$ for each pair 
$(x, u)$, where $x \in X_{0}$ and $u \colon g_{0}x \rightarrow a \in A$.
\end{defn}

Analogous to the orthogonal factorisation system on $\Cof$, every 
Mealy morphism factorises into a discrete opfibration followed by 
a functor. 
Using the notation of Definition~\ref{defn:mealy}, the \emph{image} of a
Mealy morphism $A \nrightarrow B$ is a category $X$, whose 
set of objects is $X_{0}$ and whose morphisms are pairs 
$(x, u) \colon x \rightarrow q(x, u)$, where $x \in X_{0}$ and 
$u \colon g_{0}x \rightarrow a$. 
The factorisation of a Mealy morphism may then be described by the 
following diagram: 
\begin{equation}
\begin{tikzcd}[column sep = large]
A
\arrow[r, "/"{anchor=center,sloped}]
& X
\arrow[r, "/"{anchor=center,sloped}]
& B \\[-1.5em]
g_{0} x
\arrow[d, "u"']
& x 
\arrow[l, phantom, "\cdots"]
\arrow[r, phantom, "\cdots"]
\arrow[d, "{(x, u)}"]
& f_{0}x 
\arrow[d, "{f(x, u)}"] \\
a & q(x, u) 
\arrow[l, phantom, "\cdots"]
\arrow[r, phantom, "\cdots"]
& f_{0}q(x, u) 
\end{tikzcd}
\end{equation}
Notice that the Mealy morphism $A \nrightarrow X$ describes a discrete
opfibration $X \rightarrow A$, and the Mealy morphism $X \nrightarrow B$
describes a functor $X \rightarrow B$.

\begin{lemma}
\label{lem:Mealy}
Given a Mealy morphism $A \nrightarrow B$ there is a span of functors,
\begin{equation}
\label{eqn:Mealyrep}
\begin{tikzcd}[row sep = small, column sep = small]
& X 
\arrow[ld, "\overline{g}"']
\arrow[rd, "f"]
& \\
A & & B
\end{tikzcd}
\end{equation}
where $\overline{g}$ is a discrete opfibration.
\end{lemma}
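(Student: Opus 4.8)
The plan is to build the span directly from the Mealy data, taking the apex $X$ to be exactly the image category described immediately above the statement. First I would set $\ob(X) = X_{0}$ and declare a morphism $x \rightarrow q(x, u)$ to be a pair $(x, u)$ with $x \in X_{0}$ and $u \colon g_{0}x \rightarrow a$ in $A$; by axiom~(1) the object $q(x, u)$ is well defined with $g_{0}q(x, u) = \cod(u)$, so the codomain is unambiguous. The identity on $x$ is $(x, 1_{g_{0}x})$, which is an endomorphism of $x$ by the first clause of axiom~(2), and the composite of $(x, u)$ with $(q(x, u), v)$ is taken to be $(x, v \circ u)$; this has codomain $q(x, v \circ u) = q(q(x, u), v)$ by the first clause of axiom~(3), so the typing is correct. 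Associativity and unitality are then immediate, since the second coordinate of a morphism composes exactly as the corresponding arrow does in $A$.

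Next I would exhibit the two legs. The functor $\overline{g} \colon X \rightarrow A$ is $g_{0}$ on objects and sends $(x, u) \mapsto u$; functoriality is automatic, because identities and composites in $X$ are recorded by identities and composites in the second coordinate. The functor $f \colon X \rightarrow B$ is $f_{0}$ on objects and sends $(x, u) \mapsto f(x, u)$, and here functoriality is precisely the content of the second clauses of axioms~(2) and~(3). Note that $X$ is small, as $X_{0}$ is a set and $A$ is small, so the pairs $(x, u)$ form a set of morphisms.

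The only step that invokes the defining property to be proved is that $\overline{g}$ is a discrete opfibration. Given an object $x$ and a morphism $u \colon \overline{g}x = g_{0}x \rightarrow a$ in $A$, the pair $(x, u)$ is a morphism of $X$ with $\overline{g}(x, u) = u$, so a lift exists; for uniqueness I would observe that every morphism of $X$ out of $x$ has the form $(x, w)$ for a unique $w \colon g_{0}x \rightarrow a'$, and $\overline{g}$ simply returns $w$, whence $(x, u)$ is the only lift over $u$. I do not anticipate a genuine obstacle: the construction is forced and each verification collapses onto a single Mealy axiom. The one point worth stating carefully is that the second coordinate of an arrow in $X$ is an \emph{arbitrary} morphism of $A$ emanating from $g_{0}x$, rather than an abstract arrow of $X$; this is exactly what makes the morphisms out of $x$ correspond bijectively to the morphisms out of $g_{0}x$, and hence what makes $\overline{g}$ a discrete opfibration.
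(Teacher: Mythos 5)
Your construction is exactly the paper's: the apex $X$ is the image category with objects $X_{0}$ and morphisms the pairs $(x,u)$, described in the factorisation diagram immediately preceding the lemma, with $\overline{g}$ reading off the second coordinate and $f$ applying the Mealy operation $f(x,u)$. The paper leaves the verifications implicit, and your proposal supplies them correctly, so this is the same proof with the details filled in.
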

Lemma~\ref{lem:Mealy} provides a way of constructing a 
locally fully faithful, identity-on-objects pseudofunctor 
$\Meal \rightarrow \Span(\Cat)$. 
From now on Mealy morphisms will always be understood by their span 
representation \eqref{eqn:Mealyrep}. 
It is also worth noting that every Mealy morphism may also be factorised
into a cofunctor followed by a fully faithful functor. 
These two possible factorisations would amount to a kind of ternary 
factorisation system $(\D^{\op}, \E, \M)$ on $\Meal$, however this 
observation won't be pursued in this paper. 

\section{Spans of asymmetric lenses}
\label{sec:asymmetriclenses}

The goal of this section is to introduce the following three structures: 
\begin{itemize}[$\diamond$, itemsep = +1ex]
\item The category $\Lens$ of small categories and (asymmetric) lenses; 
\item The category $\Lens(B)$ of lenses over a base category $B$;
\item The bicategory $\SpnLens$ of small categories and spans of lenses.
\end{itemize}
It is well-known that $\Lens$ does not have all pullbacks, which 
complicates the usual construction of the bicategory of spans. 
The main obstruction is that while every cospan in $\Lens$ admits a 
canonical cone, the universal property of the pullback does not hold. 
However, for any small category $B$, there is a suitably defined 
category $\Lens(B)$ which does admit cartesian products. 
Together these categories allow for the construction of a suitable 
bicategory $\SpnLens$, whose morphisms are spans in $\Lens$ but whose 
$2$-cells are defined by morphisms in $\Lens(B)$. 

\begin{defn}
An \emph{(asymmetric) lens} $(f, \varphi) \colon A \br B$ consists of a 
functor $f \colon A \rightarrow B$ together with a function, 
\[
	(a \in A, u \colon fa \rightarrow b) 
	\qquad \longmapsto \qquad
	\varphi(a, u) \colon a \rightarrow p(a, u)
\]
satisfying the axioms:
\begin{enumerate}[(1), itemsep=+1ex]
\item $f \varphi(a, u)= u$
\item $\varphi(a, 1_{fa}) = 1_{a}$
\item $\varphi(a, v \circ u) = \varphi(p(a, u), v) \circ \varphi(a, u)$
\end{enumerate}
Equivalently, an asymmetric lens consists of a functor 
$f \colon A \rightarrow B$ together with a cofunctor  
$\varphi \colon B \nrightarrow A$ such that $fa = \varphi a$ and 
$f \varphi(a, u) = u$. 
\end{defn}

The functor and cofunctor components of asymmetric lens are usually 
known as the \textsc{Get} and the \textsc{Put}, respectively. 
The three axioms above also correspond to the \textsc{PutGet}, 
\textsc{GetPut}, and \textsc{PutPut} laws, respectively. 

\begin{lemma}
\label{lem:lensrep}
Given a lens $(f, \varphi) \colon A \br B$ there is a 
commutative diagram of functors, 
\begin{equation}
\label{eqn:lensrep}
\begin{tikzcd}[column sep = small, row sep = small]
& \Lambda
\arrow[ld, "\varphi"']
\arrow[rd, "\phibar"]
& \\
A 
\arrow[rr, "f"']
& & B
\end{tikzcd}
\end{equation}
where $\varphi$ is an identity-on-objects functor and 
$\phibar$ is a discrete opfibration. 
\end{lemma}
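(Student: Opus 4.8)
The plan is to build $\Lambda$ directly from the cofunctor component of the lens and then check that the resulting span forms a commuting triangle with $f$. Recall that an asymmetric lens $(f, \varphi) \colon A \br B$ is equivalently a functor $f \colon A \rightarrow B$ together with a cofunctor $\varphi \colon B \nrightarrow A$ satisfying $fa = \varphi a$ and $f\varphi(a, u) = u$. The key observation is that the category $\Lambda$ required here is precisely the image of the cofunctor $\varphi$ appearing in the factorisation \eqref{eqn:cofunfac}, so the span itself is already supplied by Lemma~\ref{lem:cofunrep}; the only genuinely new content of this lemma is the commutativity of the triangle.

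First I would apply Lemma~\ref{lem:cofunrep} to the cofunctor $\varphi \colon B \nrightarrow A$, obtaining a span $B \xleftarrow{\phibar} \Lambda \xrightarrow{\varphi} A$ in which $\phibar$ is a discrete opfibration and $\varphi$ is identity-on-objects. Explicitly, $\Lambda$ has the objects of $A$, with morphisms the pairs $(a, u) \colon a \rightarrow p(a, u)$ for $u \colon \varphi a \rightarrow b$ in $B$; the functor $\phibar$ sends $a \mapsto \varphi a$ on objects and $(a, u) \mapsto u$ on morphisms, while $\varphi$ is the identity on objects and sends $(a, u) \mapsto \varphi(a, u)$. Reorienting this span so that $A$ and $B$ sit at the base yields exactly the shape of diagram \eqref{eqn:lensrep}, with the two required properties of $\varphi$ and $\phibar$ already in hand.

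It then remains only to verify that the triangle commutes, that is, $f \circ \varphi = \phibar$ as functors $\Lambda \rightarrow B$. On objects, $\varphi$ is the identity, so $(f \circ \varphi)(a) = fa$, which equals $\varphi a = \phibar(a)$ by the first lens condition $fa = \varphi a$. On a morphism $(a, u)$, we compute $(f \circ \varphi)(a, u) = f\varphi(a, u) = u = \phibar(a, u)$, using the second lens condition $f\varphi(a, u) = u$. Both checks are immediate, so the diagram commutes.

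There is no real obstacle here: the entire structural content—that $\phibar$ is a discrete opfibration and $\varphi$ is identity-on-objects—is inherited from Lemma~\ref{lem:cofunrep}, and the additional work reduces to the two-line commutativity check, which is forced by the defining equations of the lens. The only care required is notational, namely keeping the reused symbols $\varphi$ and $\phibar$ consistent between their roles as cofunctor data and as the two legs of the span.
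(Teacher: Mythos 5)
Your proof is correct and follows exactly the route the paper intends: the span is the image factorisation of the cofunctor component supplied by Lemma~\ref{lem:cofunrep}, and the commutativity of the triangle is forced by the two compatibility conditions $fa = \varphi a$ and $f\varphi(a,u) = u$ in the definition of a lens. The paper states the lemma without proof (citing its earlier appearance), but your argument is the evident one its setup is designed to produce.
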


Like cofunctors and Mealy morphisms, a lens will always be understood 
by its diagrammatic representation \eqref{eqn:lensrep} in $\Cat$. 
Let $\Lens$ be the category of small categories and lenses. 
Composition of lenses is given by composing the respective functor 
and cofunctor components, and the representation \eqref{eqn:lensrep} 
of the composite may be understood by the following diagram:
\begin{equation}
\begin{tikzcd}[row sep = small, column sep = small]
& &[-1em] \Lambda \times_{B} \Omega
\arrow[ld]
\arrow[rd]
\arrow[dd, phantom, "\lrcorner" rotate = -45, very near start]
&[-1em] & \\
& \Lambda 
\arrow[ld, "\varphi"']
\arrow[rd, "\phibar"]
& & \Omega
\arrow[ld, "\gamma"']
\arrow[rd, "\overline{\gamma}"]
& \\
A
\arrow[rr, "f"']
& & B
\arrow[rr, "g"']
& & C
\end{tikzcd}
\end{equation}

Given a pair of lenses $(f, \varphi) \colon A \br B$ and 
$(g, \gamma) \colon C \br B$ forming a cospan in $\Lens$, 
\begin{equation}
\label{eqn:lenscospan}
\begin{tikzcd}[row sep = small, column sep = small]
& \Lambda 
\arrow[ld, "\varphi"']
\arrow[rd, "\phibar"]
& & \Omega
\arrow[ld, "\overline{\gamma}"']
\arrow[rd, "\gamma"]
& \\
A
\arrow[rr, "f"']
& & B
& & C
\arrow[ll, "g"]
\end{tikzcd}
\end{equation}
there is a canonical cone, or ``fake pullback'', given by the span
in $\Lens$: 
\begin{equation}
\label{eqn:lenscone}
\begin{tikzcd}[row sep = small, column sep = tiny]
&[+1em] A \times_{B} \Omega
\arrow[ld, "\pi_{0}"']
\arrow[rd, "1 \times \gamma"]
& & \Lambda \times_{B} C
\arrow[ld, "\varphi \times 1"']
\arrow[rd, "\pi_{1}"]
&[+1em] \\
A
& & A \times_{B} C
\arrow[ll, "\pi_{0}"]
\arrow[rr, "\pi_{1}"']
& & C
\end{tikzcd}
\end{equation}

Note that this ``fake pullback'' diagram is sent to a genuine pullback 
via the forgetful functor $\Lens \rightarrow \Cat$. 
The category $\Lens$ also has the same terminal object as $\Cat$, and 
fake pullbacks over the terminal yields a semi-cartesian monoidal 
structure on $\Lens$. 

The reason \eqref{eqn:lenscone} fails, in general, to be a genuine
pullback in $\Lens$ is that the corresponding universal property is not 
satisfied. 
However, recall that pullbacks in $\Cat$ are the same as products in
a slice category $\Cat / B$ for some small category $B$. 
While the slice category $\Lens / B$ is not useful, there is a 
suitable category $\Lens(B)$ with cartesian products, 
together with a product-preserving functor 
$\Lens(B) \rightarrow \Cat / B$, that provides the ``fake pullbacks'' in 
$\Lens$ with a universal property. 

\begin{defn}
The category $\Lens(B)$ of \emph{lenses over a base category $B$} has 
objects given by lenses with codomain $B$, and morphisms 
$(f, \varphi) \rightarrow (g, \gamma)$ given by commutative diagrams of 
the form: 
\begin{equation}
\label{eqn:lensmorphism}
\begin{tikzcd}
\Lambda
\arrow[d, "\varphi"']
\arrow[rr, "\overline{h}"]
& & \Omega
\arrow[d, "\gamma"]
\\
A
\arrow[rr, "h"]
\arrow[rd, "f"']
& & C
\arrow[ld, "g"]
\\
& B 
\arrow[from = luu, crossing over, "\phibar" pos = 0.4] 
\arrow[from = ruu, crossing over, "\overline{\gamma}"' pos = 0.4] 
&
\end{tikzcd}
\end{equation}
\end{defn}
Note that only the functor $h \colon A \rightarrow C$ above need be 
specified; the functor $\overline{h} \colon \Lambda \rightarrow \Omega$ 
will always be uniquely induced from $h$, however may not (in general) 
make the back square in \eqref{eqn:lensmorphism} commute. 
The above definition of $\Lens(B)$ is motivated as a generalisation of
the category of $\SOpf(B)$ of split opfibrations and cleavage-preserving
functors, which is obtained as a full subcategory. 
An variant of $\Lens(B)$ has also been considered in \cite{JR13} as the 
category of algebras for a semi-monad on $\Cat / B$. 

\begin{proposition}
\label{prop:lensprod}
The category $\Lens(B)$ has products, for all small categories $B$. 
\end{proposition}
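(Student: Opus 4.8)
The plan is to exhibit the \emph{fake pullback} of \eqref{eqn:lenscone} as the product in $\Lens(B)$, by transporting the universal property of pullbacks-over-$B$ in $\Cat$ along the forgetful functor $\Lens(B) \rightarrow \Cat/B$, $(f,\varphi) \mapsto f$. The key structural fact I would record first is that this forgetful functor is faithful: by the remark following \eqref{eqn:lensmorphism}, a morphism $(f,\varphi) \rightarrow (g,\gamma)$ is completely determined by its underlying functor $h \colon A \rightarrow C$ over $B$, and the content of such a morphism is that $h$ be compatible with the Put operations, in the sense that $h\varphi(a,u) = \gamma(ha, u)$ for every pair $(a,u)$ (equivalently, that the square in \eqref{eqn:lensmorphism} commutes). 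Thus to prove $\Lens(B)$ has products it suffices to lift each product of $\Cat/B$ to $\Lens(B)$ and to check the universal property only against underlying functors. I would treat binary products in detail; the terminal object is the identity lens $(1_{B}, 1_{B}) \colon B \br B$ — for any $(e,\epsilon) \colon D \br B$ the unique functor $e \colon D \rightarrow B$ over $B$ is automatically Put-compatible by the \textsc{PutGet} axiom $e\epsilon(d,u) = u$ — and arbitrary small products are handled identically using wide pullbacks over $B$.

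For the binary product of $(f,\varphi) \colon A \br B$ and $(g,\gamma) \colon C \br B$, take the pullback $A \times_{B} C$ in $\Cat$ with its evident functor to $B$, and equip it with the \emph{pointwise} Put $\Psi\big((a,c), u\big) = \big(\varphi(a,u), \gamma(c,u)\big)$, which is well-defined since $f\varphi(a,u) = u = g\gamma(c,u)$ keeps the two lifts over a common morphism of $B$. Verifying the three lens axioms for $\Psi$ is routine and reduces coordinatewise to the corresponding axioms for $\varphi$ and $\gamma$; this is precisely the lens whose representation is the apex of the fake pullback \eqref{eqn:lenscone}. The two projections $\pi_{A}, \pi_{C}$ are functors over $B$, and each is Put-compatible essentially by construction, since $\pi_{A}\Psi\big((a,c),u\big) = \varphi(a,u) = \varphi(\pi_{A}(a,c), u)$ and symmetrically for $\pi_{C}$; hence both projections are morphisms in $\Lens(B)$.

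It remains to establish the universal property. Given any lens $(e,\epsilon) \colon D \br B$ together with morphisms $r \colon (e,\epsilon) \rightarrow (f,\varphi)$ and $s \colon (e,\epsilon) \rightarrow (g,\gamma)$ in $\Lens(B)$, the underlying functors satisfy $fr = e = gs$, so the universal property of the pullback in $\Cat$ yields a unique functor $\langle r, s\rangle \colon D \rightarrow A \times_{B} C$ over $B$ with $\pi_{A}\langle r,s\rangle = r$ and $\pi_{C}\langle r,s\rangle = s$. The one genuine thing to check is that $\langle r, s\rangle$ is again a morphism of $\Lens(B)$, i.e.\ Put-compatible; this follows by computing $\langle r,s\rangle\,\epsilon(d,u) = \big(r\epsilon(d,u),\, s\epsilon(d,u)\big) = \big(\varphi(rd,u),\, \gamma(sd,u)\big) = \Psi\big(\langle r,s\rangle d, u\big)$, where the middle equality is exactly the Put-compatibility of $r$ and of $s$. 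Uniqueness in $\Lens(B)$ then follows from uniqueness of $\langle r,s\rangle$ in $\Cat$ together with the faithfulness of the forgetful functor.

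The main obstacle is conceptual rather than computational: since \eqref{eqn:lenscone} is only a ``fake pullback'' — it fails the universal property in $\Lens$ itself — the whole point is to isolate exactly which morphisms make the universal property reappear. The crux is therefore the closure statement that Put-compatibility is preserved by the pairing $\langle r, s\rangle$, established in the displayed computation above; once this is in hand, the fact that $\Lens(B)$-morphisms are determined by their underlying functors over $B$ lets the universal property of the $\Cat$-pullback transfer verbatim. I would expect the only place demanding care to be the bookkeeping that the pointwise Put $\Psi$ genuinely defines a lens and that each structural functor in sight respects it strictly, rather than merely up to the choices implicit in the cofunctor representation.
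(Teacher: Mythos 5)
Your proof is correct and takes essentially the same route as the paper: both exhibit the product as the lens on $A \times_{B} C$ with the coordinatewise Put (whose span representation is precisely the apex $\Lambda \times_{B} \Omega$ of the fake pullback \eqref{eqn:lenscone}), equipped with the evident projections. The paper leaves the universal property as ``not difficult to show''; your explicit computation that the pairing $\langle r, s\rangle$ is again Put-compatible, combined with faithfulness of $\Lens(B) \rightarrow \Cat/B$, is exactly the omitted verification.
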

\begin{proof}
Consider a pair of lenses in $\Lens(B)$ as depicted in 
\eqref{eqn:lenscospan}.
Their product is given by the lens, 
\begin{equation}
\label{eqn:prodlens}
\begin{tikzcd}[column sep = tiny]
&
\Lambda \times_{B} \Omega
\arrow[rd, "\phibar\pi_{0} \ = \ \overline{\gamma}\pi_{1}"]
\arrow[ld, "\varphi \times \gamma"']
&[+1em] \\
A \times_{B} C
\arrow[rr, "f\pi_{0} \ = \ g \pi_{1}"']
& & B
\end{tikzcd}
\end{equation}
which is equal the composite of the appropriate lenses in 
\eqref{eqn:lenscospan} and \eqref{eqn:lenscone}. 
The product projections are given by the following diagrams: 
\begin{equation}
\begin{tikzcd}
\Lambda
\arrow[d, "\varphi"']
\arrow[from=rr, "\pi_{0}"']
& & \Lambda \times_{B} \Omega
\arrow[d, "\varphi \times \gamma"]
\\
A
\arrow[from=rr, "\pi_{0}"']
\arrow[rd, "f"']
& & A \times_{B} C
\arrow[ld, "f\pi_{0}"]
\\
& B 
\arrow[from = luu, crossing over, "\phibar" pos = 0.4] 
\arrow[from = ruu, crossing over, "\phibar\pi_{0}"' pos = 0.4] 
&
\end{tikzcd}
\qquad \qquad 
\begin{tikzcd}
\Lambda \times_{B} \Omega
\arrow[d, "\varphi \times \gamma"']
\arrow[rr, "\pi_{1}"]
& & \Omega
\arrow[d, "\gamma"]
\\
A \times_{B} C
\arrow[rr, "\pi_{1}"]
\arrow[rd, "g \pi_{1}"']
& & C
\arrow[ld, "g"]
\\
& B 
\arrow[from = luu, crossing over, "\overline{\gamma}\pi_{1}" pos = 0.4] 
\arrow[from = ruu, crossing over, "\overline{\gamma}"' pos = 0.4] 
&
\end{tikzcd}
\end{equation}
It is not difficult to show that the lens \eqref{eqn:prodlens} also 
satisfies the universal property of the product in the category 
$\Lens(B)$. 
\end{proof}

Proposition~\ref{prop:lensprod} shows that the fake pullbacks 
constructed in $\Lens$ actually have a universal property with respect
to the morphisms in $\Lens(B)$, for the appropriate small category $B$. 
Now consider the family of functors $\Lens(B) \rightarrow \Cat$ which 
assign each lens to its domain category, and each morphism 
\eqref{eqn:lensmorphism} to the corresponding functor between domains. 

\begin{defn}
Let $\SpnLens$ be the bicategory of \emph{spans of asymmetric lenses}, 
whose objects are small categories, and whose hom-categories 
$\SpnLens(A, B)$ are constructed by the pullback: 
\begin{equation}
\begin{tikzcd}[row sep = small, column sep = tiny]
& \SpnLens(A, B)
\arrow[ld]
\arrow[rd]
& \\
\Lens(A)
\arrow[rd]
& & \Lens(B)
\arrow[ld]
\\
& \Cat 
\arrow[phantom, from=uu, "\lrcorner" rotate = -45, very near start]
&
\end{tikzcd}
\end{equation}
An object in $\SpnLens(A, B)$ is a span of asymmetric lenses from 
$A$ to $B$, and a morphism is given by a functor $X \rightarrow X'$, 
together with induced functors $\Omega \rightarrow \Omega'$ and 
$\Lambda \rightarrow \Lambda'$, such that each face (including the two
outer squares) in the following diagram commute:
\begin{equation}
\label{eqn:lensspanmorp}
\begin{tikzcd}
\Omega 
\arrow[dd]
\arrow[rd]
\arrow[rr]
& & X
\arrow[ld]
\arrow[dd]
\arrow[rd]
& & \Lambda
\arrow[ll]
\arrow[dd]
\arrow[ld]
\\
& A
& & B & \\
\Omega'
\arrow[ru]
\arrow[rr]
& & X'
\arrow[lu]
\arrow[ru]
& & \Lambda'
\arrow[ll]
\arrow[lu]
\end{tikzcd}
\end{equation}
Horizontal composition is given by fake pullback of lenses, followed by 
lens composition of the projections with the appropriate legs of the 
span. 
Horizontal composition is associative up to natural isomorphism
with respect to the morphisms \eqref{eqn:lensspanmorp} above. 
\end{defn}

There is an identity-on-objects pseudofunctor 
$\Lens \rightarrow \SpnLens$ which takes a lens 
$A \br B$ to the right leg of a span of lenses
from $A$ to $B$, with left leg given by the identity lens.

The construction of the bicategory $\SpnLens$ is a generalisation of 
a \emph{category} previously defined in \cite{JR15, JR17art}. 
This category has objects given by small categories, and certain 
\emph{equivalence classes} of spans of asymmetric lenses as morphisms.
Removing the equivalence relation and considering the appropriate 
$2$-cells naturally gives rise to the bicategory $\SpnLens$ considered 
here.

\section{Symmetric lenses}
\label{sec:symmetriclenses}

The goal of this section is to introduce the bicategory 
$\SymLens$ of small categories and symmetric lenses. 

Consider the family of functors 
$U_{A, B} \colon \Meal(A, B) \rightarrow \Span(\Cat)(A, B)$ 
with the assignment on Mealy morphisms 
(Definition~\ref{defn:mealy}) stated for the span representation 
\eqref{eqn:Mealyrep} as follows:
\begin{equation}
\begin{tikzcd}[row sep = small, column sep = small]
& X 
\arrow[ld, "\overline{g}"']
\arrow[rd, "f"]
& \\
A & & B
\end{tikzcd}
\qquad \longmapsto \qquad
\begin{tikzcd}[row sep = small, column sep = small]
& X_{0} 
\arrow[ld, "g_{0}"']
\arrow[rd, "f_{0}"]
& \\
A & & B
\end{tikzcd}
\end{equation}
This functor is given by pre-composing the legs of the span with the 
canonical identity-on-objects functor from 
the discrete category $X_{0} \rightarrow X$. 
Furthermore, consider the family of functors
$\dagger_{A, B} \colon \Span(\Cat)(B, A) \rightarrow \Span(\Cat)(A, B)$
which send each span 
$(f, X, g) \colon B \nrightarrow A$ to its reverse span
$(g, X, f) \colon A \nrightarrow B$.

\begin{defn}
\label{defn:symmetriclens}
Let $\SymLens$ be the bicategory of \emph{symmetric lenses}, whose 
objects are small categories, and whose hom-categories $\SymLens(A,B)$ 
are constructed by the pullback: 
\begin{equation}
\begin{tikzcd}[row sep = small, column sep = tiny]
& \SymLens(A, B)
\arrow[ld]
\arrow[rd]
& \\
\Meal(A, B)
\arrow[rd, "U_{A, B}"']
& & \Meal(B, A)
\arrow[ld, "\dagger_{A, B}\circ U_{B, A}"]
\\
& \Span(\Cat)(A, B) 
\arrow[phantom, from=uu, "\lrcorner" rotate = -45, very near start]
&
\end{tikzcd}
\end{equation}
An object in $\SymLens(A, B)$ is a symmetric lens, and may be depicted
by a pair of spans: 
\begin{equation}
\label{eqn:symlens}
\begin{tikzcd}[row sep = small, column sep = small]
& X^{+} 
\arrow[ld, "\overline{g}"']
\arrow[rd, "f"]
& \\
A & & B
\\
& X^{-}
\arrow[lu, "g"]
\arrow[ru, "\overline{f}"']
& 
\end{tikzcd}
\end{equation}
The $2$-cells are given by the corresponding maps of Mealy morphisms, 
and horizontal composition is also inherited from $\Meal$. 
\end{defn}

\begin{notation}
In the diagram \eqref{eqn:symlens}, the upper span is a Mealy morphism 
$A \nrightarrow B$, while the lower span is a Mealy morphism 
$B \nrightarrow A$. 
As the notation suggests, both $X^{+}$ and $X^{-}$ are categories with 
the same discrete category of objects $X_{0}$. 
Moreover, the following diagrams commute:
\begin{equation}
\label{eqn:diagrams}
\begin{tikzcd}[row sep = small, column sep = small]
& X_{0}
\arrow[ld]
\arrow[rd, "{\langle g_{0}, f_{0} \rangle}"]
& \\
X^{+}
\arrow[rr, "{\langle \overline{g}, f \rangle}"']
& & A \times B
\end{tikzcd}
\qquad \qquad
\begin{tikzcd}[row sep = small, column sep = small]
& X_{0}
\arrow[ld]
\arrow[rd, "{\langle g_{0}, f_{0} \rangle}"]
& \\
X^{-}
\arrow[rr, "{\langle g, \overline{f} \rangle}"']
& & A \times B
\end{tikzcd}
\end{equation}
Taking these diagrams together with \eqref{eqn:symlens}, 
a symmetric lens may be completely described by the following 
commutative diagram of functors,
\begin{equation}
\begin{tikzcd}[row sep = small, column sep = small]
& X^{+} 
\arrow[ld, "\overline{g}"']
\arrow[rd, "f"]
& \\
A & 
X_{0}
\arrow[u]
\arrow[d]
& B
\\
& X^{-}
\arrow[lu, "g"]
\arrow[ru, "\overline{f}"']
& 
\end{tikzcd}
\end{equation}
where $\overline{g}$ and $\overline{f}$ are discrete opfibrations. 
However, for the remainder of the paper a symmetric lens will be depicted 
by a diagram of the form \eqref{eqn:symlens} for simplicity. 
\end{notation}

There is an identity-on-objects pseudofunctor 
$\Lens \rightarrow \SymLens$ with the following assignment on morphisms:
\begin{equation}
\label{eqn:lenstosym}
\begin{tikzcd}[row sep = small, column sep = small]
& \Lambda 
\arrow[ld, "\varphi"']
\arrow[rd, "\phibar"]
& \\
A
\arrow[rr, "f"']
& & B
\end{tikzcd}
\qquad \longmapsto \qquad
\begin{tikzcd}[row sep = small, column sep = small]
& A
\arrow[ld, "1"']
\arrow[rd, "f"]
& \\
A & & B
\\
& \Lambda
\arrow[lu, "\varphi"]
\arrow[ru, "\phibar"']
& 
\end{tikzcd}
\end{equation}
Note that the discrete opfibration $\phibar$ above would usually be 
denoted by $\overline{f}$ with the notational convention for symmetric 
lenses. 
From this pseudofunctor, symmetric lenses may be seen as a 
generalisation of asymmetric lenses. 
In $\SymLens$ morphisms are pairs of suitable Mealy morphisms, while in
$\Lens$ this must be a functor/cofunctor pair. 
However there is also a loss of information in \eqref{eqn:lenstosym}, 
as a symmetric lens no longer encodes the commutativity
condition of the corresponding asymmetric lens. 

The construction of the bicategory $\SymLens$ is a generalisation of 
a \emph{category} previously defined in \cite{JR15, JR17art}. 
This category has objects given by small categories, and certain 
\emph{equivalence classes} of symmetric lenses (called \emph{fb-lenses})
as morphisms.
Removing the equivalence relation and considering the appropriate 
$2$-cells yields the bicategory $\SpnLens$ considered here. 

\section{An adjoint triple} 
\label{sec:adjointtriple}

This section presents the main theorem of the paper. 

\begin{theorem}
\label{thm:main}
Let $A$ and $B$ be small categories. 
Then there exists adjoint triple $L \dashv M \dashv R$ between the 
category of  symmetric lenses and the category of spans of asymmetric 
lenses,
\begin{equation}
\begin{tikzcd}[column sep = large]
\SymLens(A, B)
\arrow[r, shift right = 5, ""{name = D}, "R"', hook]
\arrow[from = r, ""{name = M}]
\arrow[r, shift left = 5, ""{name = U}, "L", hook]
& \SpnLens(A, B)
\arrow[phantom, from = M, to = D, "\bot"]
\arrow[phantom, from = M, to = U, "\bot"]
\end{tikzcd}
\end{equation}
such that $R$ is reflective and $L$ is coreflective (that is,  
$M L = M R = 1$).
\end{theorem}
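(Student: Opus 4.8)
The plan is to construct the three functors explicitly and then verify the two adjunctions, exploiting the span representations \eqref{eqn:Mealyrep} and \eqref{eqn:lensrep} throughout. First I would describe the middle functor $M$: given a span of asymmetric lenses with apex $X$, form the symmetric lens whose two Mealy morphisms are the cross-composites --- the cofunctor $A \nrightarrow X$ followed by the functor $X \to B$ gives $X^{+}$, and the cofunctor $B \nrightarrow X$ followed by $X \to A$ gives $X^{-}$. Concretely $X^{+}$ and $X^{-}$ are the $A$-lift and $B$-lift categories of the apex, so $M$ simply reads off the two propagation directions already present in $X$. Functoriality is immediate from the morphisms \eqref{eqn:lensspanmorp}, since a map of spans of lenses restricts to compatible maps of the lift categories.

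Next I would build the coreflective embedding $L$. Given a symmetric lens \eqref{eqn:symlens}, take the apex to be the pushout $X^{+} \sqcup_{X_{0}} X^{-}$ in $\Cat$ of the two categories along their common discrete category of objects $X_{0}$. The functors $\overline{g} \colon X^{+} \to A$ and $g \colon X^{-} \to A$ induce a get $L S \to A$, while $f$ and $\overline{f}$ induce a get $L S \to B$; the lens structure to $A$ chooses its lifts from the copy of $X^{+}$ and the lens structure to $B$ from the copy of $X^{-}$, and these satisfy the cofunctor axioms because $X^{\pm}$ do and the coprojections are functors. Since the discrete opfibrations $\overline{g}$ and $\overline{f}$ are faithful, the coprojections $X^{\pm} \to L S$ are faithful, so the $A$-lift category of $L S$ is exactly $X^{+}$ and the $B$-lift category is exactly $X^{-}$; this gives $M L = 1$. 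The adjunction $L \dashv M$ then follows from the universal property of the pushout: a map $L S \to T$ is a pair of structure-preserving functors out of $X^{+}$ and $X^{-}$ agreeing on objects, which is precisely a map $S \to M T$ of symmetric lenses.

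Dually, I would construct the reflective embedding $R$ as the cofree span of lenses on a symmetric lens. The guiding case is $A = B = \mathbb{1}$, where $M$ becomes the object functor $\Cat \to \Set$, $L$ is the discrete-category functor, and $R$ must be the codiscrete-category functor; so in general $R S$ should be the apex on $X_{0}$ that is cofreely generated subject to its $A$-lifts and $B$-lifts being $X^{+}$ and $X^{-}$. I would present this apex as a suitable limit in $\Cat$ of the diagram relating $X^{+}$, $X^{-}$, $A$ and $B$, verify $M R = 1$ by recomputing the lift categories, and establish $M \dashv R$ by checking that a map $T \to R S$ is determined by, and exists for, exactly the data of a map $M T \to S$. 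Finally, $M L = M R = 1$ together with $L$ and $R$ fully faithful yields the reflective and coreflective conclusions, and I would confirm that both adjunctions are compatible with the $2$-cells of $\SpnLens$ and $\SymLens$ so that they assemble into the stated local adjunctions of bicategories.

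The hard part will be the construction and verification of $R$. The asymmetry of the discrete opfibrations --- each of $X^{+}$ and $X^{-}$ carries lifts in only one of the two directions --- means the cofree apex is not a naive pullback of $X^{+}$ and $X^{-}$ in $\Cat$: such a pullback fails to admit the two lens structures, because a lift chosen on one side cannot in general be matched by a compatible lift on the other. Identifying the correct family of morphisms for $R S$, so that the chosen lifts recover $X^{+}$ and $X^{-}$ precisely while the universal property against arbitrary span-of-lens probes still holds, is the main obstacle; the remaining verifications of the lens axioms, of $M L = M R = 1$, and of naturality are then routine diagram chases.
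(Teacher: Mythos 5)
Your constructions of $M$ and $L$ coincide with the paper's: $M$ reads off the two cross-composite Mealy morphisms $(\overline{\gamma}, \Omega, f\gamma)$ and $(\phibar, \Lambda, g\varphi)$ from the images of the two put cofunctors, and $L$ takes the pushout $X^{+} \sqcup_{X_{0}} X^{-}$ along the identity-on-objects inclusions of the discrete category $X_{0}$, with $L \dashv M$ and $ML = 1$ following from the universal property of the pushout exactly as you say. (One small caveat: $ML = 1$ is really because the image of the put cofunctor of $LS \br A$ is by construction the category of pairs $(x, u)$, which is $X^{+}$ since $\overline{g}$ is a discrete opfibration; faithfulness of the coprojections is not the operative point.)

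The genuine gap is $R$, and you have located it yourself: you characterise $RS$ by its universal property (cofree, codiscrete in the case $A = B = \mathbb{1}$) but explicitly defer "identifying the correct family of morphisms," which is the one step that actually needs an idea. The paper's answer is that no limit construction is required: take the bo-ff factorisation of $\langle g_{0}, f_{0} \rangle \colon X_{0} \rightarrow A \times B$, so that the apex $\widetilde{X}$ has objects $X_{0}$ and hom-sets $\widetilde{X}(x, x') = A(g_{0}x, g_{0}x') \times B(f_{0}x, f_{0}x')$ --- precisely the "codiscrete category relative to $A \times B$" your degenerate case predicts. The legs $\pi_{0}m, \pi_{1}m$ are then jointly fully faithful, the identity-on-objects comparison functors $\sigma \colon X^{+} \rightarrow \widetilde{X}$ and $\tau \colon X^{-} \rightarrow \widetilde{X}$ supplying the two put structures come for free from the orthogonality of bijective-on-objects against fully faithful functors applied to the squares \eqref{eqn:diagrams}, and the same orthogonality produces the unit $X \rightarrow \widetilde{X}$ of $M \dashv R$ and its uniqueness, which is the whole content of the adjunction. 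So your outline is correct in architecture and identical to the paper's for two of the three functors, but as written it does not prove the theorem: the existence of $R$ rests on this explicit factorisation, which you would need to supply and verify rather than posit.
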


The functor $M \colon \SpnLens(A, B) \rightarrow \SymLens(A, B)$ is 
defined on objects as follows:
\begin{equation}
\begin{tikzcd}
\Omega 
\arrow[rd, "\overline{\gamma}"']
\arrow[rr, "\gamma"]
&[-1em] & X
\arrow[ld, "g"]
\arrow[rd, "f"']
& &[-1em] \Lambda
\arrow[ll, "\varphi"']
\arrow[ld, "\phibar"]
\\
& A
& & B & \\
\end{tikzcd}
\qquad \longmapsto \qquad
\begin{tikzcd}[row sep = small, column sep = small]
& \Omega
\arrow[ld, "\overline{\gamma}"']
\arrow[rd, "f\gamma"]
& \\
A & & B
\\
& \Lambda
\arrow[lu, "g \varphi "]
\arrow[ru, "\phibar"']
& 
\end{tikzcd}
\end{equation}
Recall that $\gamma$ and $\varphi$ are identity-on-objects functors, 
so $\Omega$ and $\Lambda$ have the same objects, and the resulting 
symmetric lens is well-defined.  

To construct the right adjoint $R$, consider a symmetric lens given by
\eqref{eqn:symlens}.
Applying the bo-ff factorisation \eqref{eqn:boff} to the functor 
$\langle g_{0}, f_{0} \rangle \colon X_{0} \rightarrow A \times B$ 
yields a diagram: 
\begin{equation}
\begin{tikzcd}[row sep = small, column sep = small]
& \widetilde{X} 
\arrow[rd, "m"]
& \\
X_{0}
\arrow[ru, "e"]
\arrow[rr, "{\langle g_{0}, f_{0} \rangle}"']
& & A \times B
\end{tikzcd}
\end{equation}
This factorisation is chosen such that image $\widetilde{X}$ has the 
same objects as the discrete category $X_{0}$.  
Using the universal property \eqref{eqn:boffuniprop} of the bo-ff 
factorisation, together with the commutative diagrams 
\eqref{eqn:diagrams}, there exists unique, identity-on-objects functors: 
\begin{equation}
\begin{tikzcd}
X_{0} 
\arrow[d]
\arrow[r, "e"]
& \widetilde{X} 
\arrow[d, "m"]
\\
X^{+} 
\arrow[r, "{\langle \overline{g}, f \rangle}"']
\arrow[ru, dashed, "\sigma"]
& A \times B
\end{tikzcd}
\qquad \qquad 
\begin{tikzcd}
X_{0}
\arrow[d]
\arrow[r, "e"]
& \widetilde{X} 
\arrow[d, "m"]
\\
X^{-} 
\arrow[r, "{\langle g, \overline{f} \rangle}"']
\arrow[ru, dashed, "\tau"]
& A \times B
\end{tikzcd}
\end{equation}
The functor $R \colon \SymLens(A, B) \rightarrow \SpnLens(A, B)$ is 
defined on objects as follows:
\begin{equation}
\begin{tikzcd}[row sep = small, column sep = small]
& X^{+} 
\arrow[ld, "\overline{g}"']
\arrow[rd, "f"]
& \\
A & & B
\\
& X^{-}
\arrow[lu, "g"]
\arrow[ru, "\overline{f}"']
& 
\end{tikzcd}
\qquad \longmapsto \qquad
\begin{tikzcd}
X^{+} 
\arrow[rd, "\overline{g}"']
\arrow[rr, "\sigma"]
&[-1em] & \widetilde{X}
\arrow[ld, "\pi_{0} m"]
\arrow[rd, "\pi_{1} m"']
& &[-1em] X^{-}
\arrow[ll, "\tau"']
\arrow[ld, "\overline{f}"]
\\
& A
& & B & \\
\end{tikzcd}
\end{equation}

One may notice immediately that the composite 
$M R \colon \SymLens(A, B) \rightarrow \SymLens(A, B)$ is equal to 
the identity functor. 
The unit for the adjunction $M \dashv R$ is constructed using the 
universal property of the bo-ff factorisation, and is given as follows:
\begin{equation}
\begin{tikzcd}[row sep = small]
\Omega 
\arrow[dd, "1_{\Omega}"']
\arrow[rd, "\overline{\gamma}"']
\arrow[rr, "\gamma"]
& & X
\arrow[ld, "g"]
\arrow[dd, dashed]
\arrow[rd, "f"']
& & \Lambda
\arrow[ll, "\varphi"']
\arrow[dd, "1_{\Lambda}"]
\arrow[ld, "\phibar"]
\\
& A
& & B & \\
\Omega
\arrow[ru, "\overline{\gamma}"]
\arrow[rr, "\sigma"']
& & \widetilde{X}
\arrow[lu, "\pi_{0}m"']
\arrow[ru, "\pi_{1}m"]
& & \Lambda
\arrow[ll, "\tau"]
\arrow[lu, "\phibar"']
\end{tikzcd}
\end{equation}

To construct the left adjoint $L$, again consider a symmetric lens 
given by \eqref{eqn:symlens}. 
Since $X^{+}$ and $X^{-}$ have the same discrete category of objects 
$X_{0}$, there is pushout along the identity-on-objects functors 
given by: 
\begin{equation}
\begin{tikzcd}
X_{0}
\arrow[r]
\arrow[d]
& X^{-}
\arrow[d, "\iota_{1}"]
\\
X^{+}
\arrow[r, "\iota_{0}"']
& X^{+} \sqcup_{X_{0}} X^{-}
\arrow[from=lu, phantom, "\ulcorner", very near end]
\end{tikzcd}
\end{equation}
For brevity, let $\widehat{X} \coloneqq X^{+} \sqcup_{X_{0}} X^{-}$. 
Note that identity-on-objects functors are stable under pushout, so 
both $\iota_{0}$ and $\iota_{1}$ are also identity-on-objects functors.  
The functor $R \colon \SymLens(A, B) \rightarrow \SpnLens(A, B)$ is 
defined on objects as follows: 
\begin{equation}
\begin{tikzcd}[row sep = small, column sep = small]
& X^{+} 
\arrow[ld, "\overline{g}"']
\arrow[rd, "f"]
& \\
A & & B
\\
& X^{-}
\arrow[lu, "g"]
\arrow[ru, "\overline{f}"']
& 
\end{tikzcd}
\qquad \longmapsto \qquad
\begin{tikzcd}
X^{+} 
\arrow[rd, "\overline{g}"']
\arrow[rr, "\iota_{0}"]
&[-1em] & \widehat{X}
\arrow[ld, "{[\overline{g},\, g]}"]
\arrow[rd, "{[f,\, \overline{f}]}"']
& &[-1em] X^{-}
\arrow[ll, "\iota_{1}"']
\arrow[ld, "\overline{f}"]
\\
& A
& & B & \\
\end{tikzcd}
\end{equation}

One may notice immediately that the composite 
$M L \colon \SymLens(A, B) \rightarrow \SymLens(A, B)$ is equal to 
the identity functor. 
The counit for the adjunction $L \dashv M$ is constructed using the 
universal property of the pushout, and is given as follows:
\begin{equation}
\begin{tikzcd}
\Omega
\arrow[rd, "\overline{\gamma}"']
\arrow[rr, "\iota_{0}"]
\arrow[dd, "1_{\Omega}"']
&[-1em] & \widehat{X}
\arrow[dd, dashed, "{[\gamma,\, \varphi]}" description]
\arrow[ld, "{[\overline{\gamma},\, g\varphi]}"']
\arrow[rd, "{[f\gamma,\, \phibar]}"]
& &[-1em] \Lambda
\arrow[ll, "\iota_{1}"']
\arrow[ld, "\phibar"]
\arrow[dd, "1_{\Lambda}"]
\\[-0.5em]
& A
& & B & \\[-0.5em]
\Omega
\arrow[ru, "\overline{\gamma}"]
\arrow[rr, "\gamma"']
& & X
\arrow[lu, "g"']
\arrow[ru, "f"]
& & \Lambda
\arrow[ll, "\varphi"]
\arrow[lu, "\phibar"']
\end{tikzcd}
\end{equation}

\begin{corollary}
There exist identity-on-objects pseudofunctors between the bicategory
of symmetric lenses and the bicategory of spans of asymmetric lenses,
\begin{equation}
\begin{tikzcd}[column sep = large]
\SymLens
\arrow[r, shift right = 5, ""{name = D}, "R"']
\arrow[from = r, ""{name = M}, "M" description]
\arrow[r, shift left = 5, ""{name = U}, "L"]
& \SpnLens
\end{tikzcd}
\end{equation}
such that $L$ and $R$ are locally fully faithful and are locally adjoint
to $M$. 
\end{corollary}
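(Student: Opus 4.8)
The plan is to upgrade the hom-wise data of Theorem~\ref{thm:main} to the level of bicategories. Since $L$, $M$, and $R$ are all identity-on-objects, it suffices to exhibit the compositor and unitor $2$-cells making each into a pseudofunctor, to check the pseudofunctor coherence axioms, and to record that the hom-wise adjunctions are compatible with these compositors. I would begin with $M$. Horizontal composition in $\SpnLens$ is given by fake pullback of lenses (sent by the forgetful functor $\SpnLens \to \Span(\Cat)$ to a genuine pullback) followed by lens composition, while horizontal composition in $\SymLens$ is inherited from $\Meal$ and hence is likewise built from span composition in $\Cat$ via Lemma~\ref{lem:Mealy}. Because $M$ acts by composing the legs of a span of lenses, it commutes with both operations up to the canonical comparison isomorphisms between iterated pullbacks in $\Cat$; these supply the compositor for $M$, and its coherence axioms descend from the associativity coherence of $\Span(\Cat)$.

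Local full faithfulness of $L$ and $R$ is then immediate: Theorem~\ref{thm:main} exhibits each $\SymLens(A,B)$ as a coreflective subcategory via $L$ and a reflective subcategory via $R$, so every $L_{A,B}$ and $R_{A,B}$ is fully faithful. To equip $L$ and $R$ with pseudofunctor structure I would invoke doctrinal adjunction: being pointwise left, respectively right, adjoint to the pseudofunctor $M$, the family $L$ acquires a canonical \emph{oplax} structure and $R$ a canonical \emph{lax} structure, whose structure $2$-cells are the mates of the compositor of $M$ under the adjunctions $L \dashv M$ and $M \dashv R$.

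The substance of the argument is to promote these oplax and lax structures to pseudo, that is, to show the mate $2$-cells are invertible. For $R$ the lax compositor $R t \circ R s \to R(t \circ s)$ is an isomorphism exactly when the essential image of $R$ is closed under horizontal composition in $\SpnLens$ up to isomorphism, and dually for $L$; these closure properties I would verify directly from the explicit constructions. For $L$, whose value is built from the pushout $\widehat{X} = X^{+} \sqcup_{X_{0}} X^{-}$ along identity-on-objects functors (stable under pushout), closure follows because this pushout commutes with the fake-pullback composition, both being colimit-type constructions over the shared discrete object categories. For $R$, whose value is built from the bo-ff factorisation producing $\widetilde{X}$ (chosen to preserve the object set $X_{0}$), one must check that factoring the composite agrees with composing the factorisations. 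I expect the $R$ case to be the main obstacle, since orthogonal factorisation systems do not in general commute with pullbacks; here the verification should go through precisely because the object set of a composite Mealy morphism is the pullback of the discrete object categories, on which the bijective-on-objects leg of the factorisation is transparent and the fully faithful leg is pinned down by the universal property \eqref{eqn:boffuniprop} of the bo-ff system.

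Finally, with $L$, $M$, $R$ established as identity-on-objects pseudofunctors, the hom-wise adjunctions of Theorem~\ref{thm:main} assemble into local adjunctions $L \dashv M$ and $M \dashv R$: compatibility of the units and counits with the compositors holds by the very mate construction used above, and since $M L = M R = 1$ the counit of $M \dashv R$ and the unit of $L \dashv M$ are identities, trivialising the remaining modification axioms. This yields the claimed local adjoint triple of pseudofunctors, with $L$ and $R$ locally fully faithful and locally adjoint to $M$.
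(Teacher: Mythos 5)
The paper offers no written proof of this corollary, so the only material to compare against is the construction in Theorem~\ref{thm:main} itself; your overall architecture --- make $M$ a pseudofunctor via the canonical iterated-pullback comparisons, deduce local full faithfulness of $L$ and $R$ from $ML = MR = 1$, and transport structure to $L$ and $R$ by taking mates under the hom-wise adjunctions --- is the natural route and presumably the intended one. The genuine gap is the final promotion of the resulting oplax structure on $L$ and lax structure on $R$ to pseudofunctor structure. You flag this as ``the main obstacle'' and assert it ``should go through'', but it does not. Already for identity $1$-cells: $R$ applied to the identity symmetric lens on $A$ (whose object category is $\ob A$, with both legs of each span the identity on $A$) returns the span $A \leftarrow \widetilde{X} \rightarrow A$ where $\widetilde{X}$ is the full image of the object-diagonal $\ob A \rightarrow A \times A$, so a morphism of $\widetilde{X}$ is an \emph{arbitrary} pair of parallel morphisms of $A$; the unit comparison $A \rightarrow \widetilde{X}$, $u \mapsto (u,u)$, is not full, hence not invertible. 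Dually, $L$ applied to the identity symmetric lens has apex the pushout $A \sqcup_{\ob A} A$, whose morphisms are alternating words of morphisms of $A$, and the counit comparison to $A$ is not faithful. Binary composites exhibit the same failure: $R(t \circ s)$ is the full image of $X_{0} \times_{\ob B} Y_{0} \rightarrow A \times C$, whose morphisms carry only an $A$- and a $C$-component, whereas $R(t) \circ R(s)$ has apex the fake pullback $\widetilde{X} \times_{B} \widetilde{Y}$, whose morphisms additionally retain a $B$-component; the identity-on-objects comparison between them is neither full nor faithful in general. Your heuristic that the bijective-on-objects leg is ``transparent'' and the fully faithful leg is ``pinned down'' by \eqref{eqn:boffuniprop} only controls what happens on objects; the obstruction lives entirely at the level of morphisms.

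Consequently the mate construction yields no more than an oplax structure on $L$ and a lax structure on $R$ --- which is exactly what the standard notion of a local adjunction between bicategories calls for, and is compatible with both being locally fully faithful. If the corollary is to be proved with all three arrows as genuine pseudofunctors, a different argument (or a weakening of the statement to (op)lax functors) is required; the examples above indicate the latter. Your steps establishing local full faithfulness of $L$ and $R$, and the pseudofunctoriality of $M$, are sound.
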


\section{Concluding remarks and future work} 
\label{sec:conclusion}

This paper has established a new category-theoretic foundation for 
symmetric delta lenses. 
In contrast to the algebraic approach of Johnson and Rosebrugh, this 
paper develops a natural diagrammatic approach to symmetric lenses and 
spans of asymmetric lenses, by using the properties of certain classes 
of functors. 
This framework yields significantly simpler definitions 
(for example, compare the characterisation of a symmetric lens via
Mealy morphisms in \eqref{eqn:symlens} to \cite[Definition~7]{JR17art}),
and allows for a clearer understanding of the composition of symmetric 
lenses, which is important for their application in fields such as 
database view-updating and model-driven engineering. 

While symmetric lenses and spans of asymmetric lenses were previously 
understood in \cite{JR17art} as morphisms in an \emph{isomorphic} pair 
of categories, the bicategories $\SymLens$ and $\SpnLens$ constructed in
this paper share a more interesting relationship.
The main theorem shows that $\SymLens(A, B)$ is both a reflective and 
coreflective subcategory of $\SpnLens(A, B)$, which suggests that 
symmetric lenses are less expressive than spans of asymmetric lenses
when modelling update propagation between systems.
The subcategory inclusions also provide a way of characterising
which spans of asymmetric lenses arise from symmetric lenses: 
either the functor components of the span are jointly fully faithful 
(via the right adjoint) or the identity-on-objects functors in the 
cofunctor components are pushout injections (via the left adjoint). 
A detailed study of the mathematical implications of the local 
adjunction between $\SymLens$ and $\SpnLens$ is left for future 
research. 

The notion of universal symmetric lenses, as considered in 
\cite{JR17pro, JR19}, will also be the focus of future work. 
In the paper \cite{Cla20}, explicit conditions for
universal asymmetric lenses were established, and it is hoped that these
results may be extended to the symmetric setting. 

Although this paper has established explicit technical advances towards
the understanding of symmetric delta lenses, it also suggests broader 
goals for the understanding of lenses in applied category theory. 
Analogous to the transition from functions to relations,  
this paper further develops the transition from asymmetric lenses to
the general setting of symmetric lenses, as pioneered by Johnson and 
Rosebrugh.
Realising this framework with other kinds of lenses in the 
literature has the potential to capture a wider range of applications
and deliver mathematically interesting results. 

\subsection*{Acknowledgements} 

The author is grateful to Michael Johnson and the anonymous reviewers 
for providing helpful feedback on this work. 
The author would also like to thank the organisers of the ACT2020 
conference.

\bibliographystyle{eptcs}
\bibliography{bibliography-complete}

\end{document}